\newcommand{\R}{\mathbb{R}}
\newcommand{\C}{\mathbb{C}}
\newcommand{\N}{\mathbb{N}}
\newcommand{\G}{\mathbb{G}}
\newcommand{\Sc}{\overline{S}}
\newcommand{\Bscr}{\mathscr{B}}
\newcommand{\taboo}[2]{{}_{#1} p_{{#2}}}
\newcommand{\Rbar}{\overline{R}}
\newcommand{\us}[1]{u_{S'}^{(#1)}}
\newcommand{\abs}[1]{\bigl| #1 \bigr|} % absolute value
\newcommand{\norm}[1]{\lVert#1\rVert} % norm
\newcommand{\normtwo}[1]{% Peter Grill norm @tex.stackexchange.com
{\left\vert\kern-0.25ex\left\vert\kern-0.25ex\left\vert #1 
    \right\vert\kern-0.25ex\right\vert\kern-0.25ex\right\vert} }
\newcounter{main}
\numberwithin{equation}{section}
\newtheorem{theorem}{Theorem}[section]
\newtheorem{proposition}[theorem]{Proposition}
\newtheorem{lemma}[theorem]{Lemma}
\newtheorem{definition}{Definition}[section]
\newtheorem{maintheorem}{Theorem}
\newcommand{\blanksquare}{\,\,\,$\sqcup\!\!\!\!\sqcap$}
\newcommand{\Pp}{\mathbb{P}}
\title[Eigenvectors of isospectral graph transformations]{\textbf{Eigenvectors of isospectral graph transformations}}
\date{\today}
\author[P. Duarte]{Pedro Duarte}
\address{CMAF, Departamento de Matem\'{a}tica,
  Faculdade de Ci\^{e}ncias da Universidade de Lisboa, Campo Grande, 1749-016 Lisboa, Portugal.}
\email{pedromiguel.duarte@gmail.com}
\author[M. J. Torres]{Maria Joana Torres}
\address{CMAT, Departamento de Matem\'atica e Aplica\c{c}\~{o}es, 
Universidade do Minho, 
Campus de Gualtar, 
4700-057 Braga, Portugal}
\email{jtorres@math.uminho.pt}
\begin{document}

\maketitle

\begin{abstract}
L.A. Bunimovich and  B.Z. Webb developed a theory for isospectral 
graph reduction.
We make a simple observation regarding the relation between eigenvectors of the original graph and its reduction,
that sheds new light on this theory.
As an application we propose an updating algorithm for the maximal eigenvector of the Markov matrix associated to
a large sparse dynamical network.
\end{abstract}

\bigskip

{\footnotesize\textbf{Keywords:} \emph{Isospectral graph reduction}, \emph{eigenvector} }

\smallskip

\textbf{2000 Mathematics Subject Classification:} 05C50,  15A18 

\bigskip

\begin{section}{Introduction}

The science of real world networks, i.e. networks found in nature, society and technology, has been in recent years one of the most active areas of research. 
Frequently cited examples include biological networks, social and economical networks, neural networks, networks of information like citation networks and the Internet (\cite{BA, DM, NB, W}).
These networks are typically described by a large and often complex graph of interactions, whose nodes represent elements of the network 
and whose  edges determine the topology  of interactions between these elements.
 
In~\cite{BW,BW2,BW3} it was introduced a tool, the {\em isospectral graph transformation}, that provides a way of understanding the interplay between the topology of a network and its dynamics.  More precisely, the authors introduce a concept of transformation of a graph (either by reduction or expansion) with the key property of preserving part of the spectrum of the graph's adjacency matrix.
In order to not contradict the fundamental theorem of algebra,  isospectral graph transformations preserve the spectrum of the graph (in particular the number of eigenvalues) by permitting edges to be weighted by functions of a spectral parameter $\lambda$ (see ~\cite[Theorem 3.5.]{BW}).
These transformations allow  changing the topology of a network (modifying the interactions, reducing or increasing the number of nodes),
while maintaining properties related to the network's dynamics.
In~\cite{BW3,VW} the authors  relate the pseudospectrum of a graph 
to the pseudospectrum of its reduction.

The work~\cite{BW} contains also an application of this isospectral theory to a class of dynamical systems modeling dynamical networks. In particular, they provide a  
 sufficient condition for a dynamical network  having a globally attracting fixed point.

The aim of this paper is to show that 
isospectral graph reductions preserve the eigenvectors associated to the eigenvalues of the graph's weighted adjacency matrix.  
This result can also be obtained as a corollary of the relation between  the pseudospectrum of a graph 
with the pseudospectrum of its reduction (see~\cite[equation 5.6 on p. 137]{BW3},  ~ \cite[equation 15 on p. 157]{VW}).

In section 2 we state and prove our main result,  explaining how to reconstruct
an eigenvector of the graph's adjacency matrix  from an eigenvector of the reduced matrix.

In section 3 we give a probabilistic interpretation and an application of the main theorem to Markov Chains.

In section 4  we describe an updating algorithm for {\em page rank} type eigenvectors, i.e.,
eigenvectors which assign the relative importance of each node in a dynamical network. 
We compare the cost of this algorithm with the cost of approximating a page rank eigenvector
by iterations of the updated adjacency matrix.

In the last section (appendix) we prove a  bound used to estimate the cost
of the updating algorithm in the previous section.
\end{section}

\bigskip
\bigskip

\begin{section}{Eigenvectors of isospectral graph reductions}\label{igr}

Let $\G$ be the class of weighted directed graphs, with edge weights in the set $\C$. 
More precisely, a graph $G \in \G$ is an ordered triple $G=(V,E,\omega)$ where $V=\{1,\ldots,n\}$ is the {\em vertex set},  $E \subset V \times V$ is the set of {\em directed edges},  and $\omega:E \rightarrow \C$ is the {\em weight function}. 
Denote by $M_G=(\omega(i,j))_{i,j \in V}$ the {\em weighted adjacency matrix} of $G$,
with the convention that $\omega(i,j)=0$ whenever $(i,j)\notin E$. 

A {\em path} $\gamma=(i_0,\ldots, i_p)$ in the graph $G=(V,E,\omega)$ is an ordered sequence of distinct vertices 
$i_0,\ldots, i_p \in V$ such that $(i_\ell,i_{\ell+1}) \in E$ for $0 \leq \ell \leq p-1$.
The vertices $i_1,\ldots,i_{p-1} \in V$ of $\gamma$ are called the {\em interior vertices}. If $i_0=i_p$ then $\gamma$ is a {\em cycle}. A cycle  is called a {\em loop} if $p=1$ and $i_0=i_1$.
The length of a path $\gamma=(i_0,\ldots, i_p)$ is the integer $p$.
Note that there are no paths of length $0$ and that  every
edge $(i,j)\in E$ is a path of length $1$.

 If $S \subset V$ we will
 write $\overline{S}=V \backslash S$.
 
\bigskip

\begin{definition} $(\lambda$-Structural set $)$\label{structural} \,
Let $G=(V,E,\omega)$. 
Given $\lambda \in \C$, a nonempty vertex set  $S \subset V$ is a {\em $\lambda$-structural set} of $G$ if
 \begin{enumerate}
 \item[$\text{(i)}$] each cycle of $G$, that is not a loop, contains a vertex in $S$; and
  \item[$\text{(ii)}$] $\omega(i,i) \neq \lambda$ for each $i \in \overline{S}$.
 \end{enumerate}
\end{definition}

\bigskip

Given a $\lambda$-structural set $S$, let us call {\em branch of}  $(G,S)$ to a  path
$\beta=(i_0,i_1,\ldots, i_{p-1}, i_p)$ such that  $i_1,\ldots, i_{p-1}\in \Sc$ and  $i_0, i_p\in V$.
 We denote by $\Bscr=\Bscr_{G,S}$ the set of all branches of $(G,S)$.
 Given vertices $i,j\in V$, we denote by
$\Bscr_{i j}$  the set of all branches in $\Bscr$ that start in $i$ and end in $j$. For each branch $\beta=(i_0,i_1,\ldots, i_p)$ we define
the {\em weight of $\beta$} as follows:
\begin{equation}\label{weight}
\omega(\beta,\lambda):= \omega(i_0,i_1)\,\prod_{\ell=1}^{p-1} \frac{\omega(i_{\ell},i_{\ell+1})}{\lambda-\omega(i_{\ell},i_\ell)} \;.
\end{equation}

\bigskip
Given $i,j \in V$ set
\begin{equation}\label{reduced:matrix}
 R_{ij}(G,S,\lambda):= \sum_{\beta\in \Bscr_{ij}} \omega(\beta,\lambda)\;.
\end{equation}

\bigskip

\begin{definition}$($Reduced matrix$)$\label{igr} \, Given $G \in \G$ and a $\lambda$-structural set $S$, the reduced matrix $R_S(G,\lambda)$ is the $S  \times S$-matrix with entries $R_{ij}(G,S,\lambda)$, $i,j \in S$.
\end{definition}

\bigskip

The following theorem states that 
isospectral graph reduction  preserves the eigenvectors associated to eigenvalues of the graph's weighted adjacency matrix. 

\bigskip

\begin{maintheorem}\label{eigenvector}
Given $G \in \G$, let
$\lambda_0$ be an eigenvalue of $M_G$ and $u=(u_1,u_2,\ldots,u_n) \in \C^n$ be the corresponding eigenvector, $M_G \,u = \lambda_0 u$. Assume that $S=\{m+1,\ldots,n\}$ is a $\lambda_0$-structural set of $G$. Then $\lambda_0$ is also an eigenvalue of $R_S(G,\lambda_0)$ and $R_S(G,\lambda_0)\, u_S = \lambda_0 u_S$, where $u_S=(u_{m+1},\ldots,u_n)$ is the restriction of $u$ to $S$.
\end{maintheorem}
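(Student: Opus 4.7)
The plan is to exploit the Schur-complement structure arising from the partition $V=\Sc\sqcup S$. In block form the eigenvector equation $M_G u=\lambda_0 u$ reads
\[
(\lambda_0 I-M_{\Sc\Sc})\,u_{\Sc}=M_{\Sc S}\,u_S,\qquad M_{SS}\,u_S+M_{S\Sc}\,u_{\Sc}=\lambda_0\,u_S.
\]
If $\lambda_0 I-M_{\Sc\Sc}$ is invertible (the first thing I would establish), one can solve the first block equation for $u_{\Sc}$ and substitute into the second to obtain
\[
\bigl[M_{SS}+M_{S\Sc}(\lambda_0 I-M_{\Sc\Sc})^{-1}M_{\Sc S}\bigr]\,u_S=\lambda_0\,u_S,
\]
so the proof reduces to identifying this $S\times S$ Schur-complement matrix with $R_S(G,\lambda_0)$. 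Invertibility also forces $u_S\neq 0$, since $u_S=0$ would force $u_{\Sc}=0$ and contradict $u\neq 0$; in particular $\lambda_0$ is a genuine eigenvalue of $R_S(G,\lambda_0)$.

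To establish invertibility and expand the inverse in a useful form, I would write $M_{\Sc\Sc}=D+N$ with $D=\diag(\omega(i,i))_{i\in\Sc}$ the diagonal part. Condition (ii) of Definition~2.1 makes $\lambda_0 I-D$ invertible, and condition (i) forbids non-loop cycles in the subgraph induced on $\Sc$, so a topological ordering of $\Sc$ makes $N$ strictly triangular and hence nilpotent; the same ordering makes $(\lambda_0 I-D)^{-1}N$ strictly triangular, therefore nilpotent as well. This yields a \emph{finite} Neumann expansion
\[
(\lambda_0 I-M_{\Sc\Sc})^{-1}=\sum_{k\geq 0}\bigl[(\lambda_0 I-D)^{-1}N\bigr]^{k}(\lambda_0 I-D)^{-1},
\]
which I expect to match the combinatorics of branches directly.

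Substituting this expansion into the Schur complement, the $(i,j)$-entry for $i,j\in S$ unfolds into a sum over sequences $(i,i_1,\ldots,i_{k+1},j)$ with $i_1,\ldots,i_{k+1}\in\Sc$ and $i_\ell\neq i_{\ell+1}$, each weighted exactly as in~\eqref{weight}; the length-$1$ contribution from $M_{SS}$ supplies the remaining $\omega(i,j)$ term. The resulting expression is a sum over walks from $i$ to $j$ with interior in $\Sc$ and no two consecutive vertices equal. The main obstacle, and the only place where condition (i) of Definition~2.1 is used a second time, is upgrading ``no consecutive repeats'' to ``all interior vertices distinct,'' so that each such walk is actually a branch in $\Bscr_{ij}$: any repetition $i_a=i_b$ with $1\le a<b\le p-1$ would, upon passing to a minimal such pair, produce a non-loop cycle in the subgraph induced on $\Sc$, contradicting the $\lambda_0$-structural hypothesis. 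Hence every nonzero contribution is a genuine branch and one obtains $\sum_{\beta\in\Bscr_{ij}}\omega(\beta,\lambda_0)=R_{ij}(G,S,\lambda_0)$, completing the identification and the proof.
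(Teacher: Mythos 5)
Your argument is correct, and it reaches the key identity by a route that is recognizably different in its packaging from the paper's, even though the two proofs use the structural-set hypotheses in exactly the same two places. The paper eliminates the $\Sc$-coordinates component-wise: it solves the $\ell$-th eigenvector equation for $u_\ell$ ($\ell\in\Sc$), substitutes that expression into itself repeatedly, and observes that the remainder term must vanish after $m+1$ steps because its indices range over distinct elements of the $m$-element set $\Sc$ (no non-loop cycles in $\Sc$); the coefficients accumulating along the way are read off directly as the partial sums $R^{(p)}_{ij}$ over branches of length $p$. You perform the same elimination globally, as a Schur complement: the splitting $M_{\Sc\Sc}=D+N$ with $\lambda_0 I-D$ invertible by condition (ii) and $N$ nilpotent by condition (i), the finite Neumann expansion of $(\lambda_0 I-M_{\Sc\Sc})^{-1}$, and then the combinatorial identification of the expansion with $\sum_{\beta\in\Bscr_{ij}}\omega(\beta,\lambda_0)$, where the minimal-repeated-pair argument correctly upgrades ``no consecutive repeats'' to ``all interior vertices distinct.'' What your version buys: the invertibility of $\lambda_0 I-M_{\Sc\Sc}$ is stated and proved rather than left implicit, and you note that $u_S\neq 0$ (since $u_S=0$ would force $u_{\Sc}=0$), so that $\lambda_0$ genuinely is an eigenvalue of the reduced matrix --- a point the paper's proof passes over in silence. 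What the paper's version buys: it needs no matrix inversion or topological ordering of $\Sc$, stays entirely at the level of scalar identities, and produces the graded decomposition $R_{ij}=\sum_{p=1}^{m+1}R^{(p)}_{ij}$ as a by-product. Your approach is also the one that connects most directly to the pseudospectrum formulation of isospectral reduction in the Bunimovich--Webb and Vasquez--Webb references cited in the introduction.
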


\bigskip

In the proof of Theorem~\ref{eigenvector} we will 
use the following useful notation. Given vertices $i,j\in S$, we denote by
$\Bscr^{(p)}_{i j}$ the set of all branches in $\Bscr$ of lengh $p$ that  start in $i$ and end in $j$. Given $i,j \in S$ set
$$ R^{(p)}_{ij}(G,S,\lambda):= \sum_{\beta\in \Bscr^{(p)}_{ij}} \omega(\beta,\lambda)\;.$$
With this notation, it is clear
that the reduced weights $R_{ij}(G,S,\lambda)$, $i,j \in S$, associated to the structural set $S=\{m+1,\ldots,n\}$ satisfy
$$R_{ij}(G,S,\lambda)=\sum_{p=1}^{m+1}  R^{(p)}_{ij}(G,S,\lambda).$$
To simplify the notation we will 
write $R_{ij}$ and $R^{(p)}_{ij}$ instead of $R_{ij}(G,S,\lambda)$ and $R^{(p)}_{ij}(G,S,\lambda)$, respectively.
We now prove Theorem~\ref{eigenvector}.

\begin{proof} 
Clearly, the eigenvector $u$ can be written as
$u=(u_{\overline{S}},u_S)$, where 
$u_{\overline{S}}=(u_\ell)_{\ell \in \overline{S}}$ and $u_S=(u_i)_{i \in S}$.
Since $M_G \,u = \lambda_0 \, u$, one has for all $\ell \in \overline{S}$,
\begin{equation*}
\sum_{k \in S} \omega(\ell,k)\, u_k \, + \,  \omega(\ell,\ell) \, u_\ell \, + \,  
\sum_{  \substack{ \ell' \in \overline{S} \\  \ell' \neq \ell  } } 
 \omega(\ell,\ell') \,u_{\ell'}  = \lambda_0 \, u_\ell\;,
\end{equation*}
which is equivalent to
\begin{equation}\label{iteration}
u_\ell=\sum_{k \in S} \frac{\omega(\ell, k)}{\lambda_0 - \omega(\ell, \ell)} \, u_k \, + \,  
 \sum_{ \substack{ \ell' \in \overline{S} \\   \ell' \neq \ell  } } 
\frac{\omega(\ell,\ell')}{\lambda_0 - \omega(\ell,\ell)} \, u_{\ell'}.  
\end{equation}
Hence for all $i \in S$, 
$$
u_i   = \displaystyle{\sum_{
{\substack{ 
k \in S \\
k \neq i 
}
}} \frac{\omega(i,k)}{\lambda_0 - \omega(i,i)} \, u_k 
\, + \,  
\sum_{
\ell \in \overline{S} } 
\frac{\omega(i,\ell)}{\lambda_0 - \omega(i,i)} \, u_{\ell}} \;. $$
Substituting $u_\ell$ by ~(\ref{iteration}) in this relation we get

\begin{align*} 
u_i & =  \sum_{ \substack{ k \in S \\ k \neq i } } 
\frac{\omega(i,k)}{\lambda_0 - \omega(i,i)} \, u_k 
\, + \,
 \sum_{ \substack{ k \in S \\  \ell \in \overline{S} } } 
 \frac{\omega(i,\ell) \, \omega(\ell, k)}{(\lambda_0 - \omega(i,i))(\lambda_0 - \omega(\ell,\ell))} \, u_k   \\
 & \qquad  + \,  \sum_{ \substack{ \ell, \ell' \in \overline{S} \\ \ell' \neq \ell } } 
 \frac{\omega(i,\ell) \, \omega(\ell,\ell')}{(\lambda_0 - \omega(i,i))(\lambda_0 - \omega(\ell,\ell))} \, u_{\ell'}  \\ 
  &   =   \sum_{ \substack{ k \in S \\ k \neq i } }  
  \frac{R^{(1)}_{i k}}{\lambda_0 - \omega(i,i)} \, u_k 
\, + \,
\sum_{k \in S} \frac{R^{(2)}_{i k}}{ \lambda_0 - \omega(i,i) } \, u_k 
\, + \,
\sum_{ \substack{ \ell, \ell' \in \overline{S} \\ \ell' \neq \ell } } 
\frac{\omega(i,\ell) \, \omega(\ell,\ell')}{(\lambda_0 - \omega(i,i))(\lambda_0 - \omega(\ell,\ell))} \, u_{\ell'}. 
\end{align*}

\medskip
\noindent
Proceeding inductively,  we obtain for all $1 \leq p \leq m$

\begin{align*}
\begin{split}
u_i=  &  \sum_{\substack{ k \in S \\ k \neq i } } 
\frac{R^{(1)}_{i k}}{\lambda_0 - \omega(i,i)} \, u_k 
\, + \,
\sum_{k \in S} \frac{R^{(2)}_{i k}}{ \lambda_0 - \omega(i,i) } \, u_k 
\, + \,
\cdots
\, + \,
\sum_{k \in S} \frac{R^{(p)}_{i k}}{ \lambda_0 - \omega(i,i) } \, u_k
\,  +  \\
& 
+ \,
\sum_{
\substack{  \ell_1,\cdots,\ell_{p-1}, \ell' \in \overline{S} \\
\ell_r \neq \ell_s; \, \ell_s \neq \ell } } 
\frac{\omega(i,\ell_1) \,\omega(\ell_1,\ell_2)\,\cdots\, \omega(\ell_{p-1}, \ell')}{(\lambda_0 - \omega(i,i))(\lambda_0 - \omega(\ell_1,\ell_1)) (\lambda_0 - \omega(\ell_2,\ell_2)) \cdots (\lambda_0 - \omega(\ell_{p-1},\ell_{p-1}))  } \, u_{\ell'}.
\end{split}
\end{align*}
\bigskip
\noindent
Note that the indices in $\overline{S}$ are all distinct because there are no non-loop cycles  in $\overline{S}$.
Since $\overline{S}=\{1,\ldots,m\}$ has $m$ elements, after $m+1$ steps we get that
$$u_i= \sum_{\substack{ k \in S \\ k \neq i } } 
\frac{R^{(1)}_{i k}}{\lambda_0 - \omega(i,i)} \, u_k 
\, + \,
\sum_{k \in S} \frac{R^{(2)}_{i k}}{ \lambda_0 - \omega(i,i) } \, u_k 
\, + \,
\cdots
\, + \,
\sum_{k \in S} \frac{R^{(m+1)}_{i k}}{ \lambda_0 - \omega(i,i) } \, u_k\; ,
$$
which is equivalent to
$$ (\omega(i,i) - \lambda_0)\,u_i 
\, +\,
\sum_{p=2}^{m+1} R^{(p)}_{ii} \, u_i
\, + \,
\sum_{\substack{ k \in S \\ k \neq i } }
\sum_{p=1}^{m+1}R^{(p)}_{i k} \,u_k =0\;.
$$
This in turn  is equivalent to
$$ \left(\sum_{p=1}^{m+1} R^{(p)}_{ii} - \lambda_0\right)  u_i
\, + \,
\sum_{\substack{ k \in S \\ k \neq i } }
\sum_{p=1}^{m+1}R^{(p)}_{i k} \, u_k =0\;. 
$$
Therefore, we get that
$$(R_{ii} - \lambda_0) \, u_i \, + \, \sum_{
{\tiny \begin{array}{c}
k \in S \\
k \neq i
\end{array}
}}
R_{ik} \, u_k =0\;,
$$
and hence for all $i \in S$, 
$$ \sum_{
k \in S
}
R_{ik} \, u_k = \lambda_0 \,u_i\;.
$$
This proves that
$\lambda_0$ is also an eigenvalue of $R_S(G,\lambda_0)$ with
$R_S(G,\lambda_0)\, u_S = \lambda_0 u_S$.
\end{proof}

\bigskip

To explain how to reconstruct the eigenvector of
$M_G$ from the eigenvector of the reduced matrix $R_S(G,\lambda_0)$ we need the following concept of depth of vertex $i\in V$.

\bigskip

\begin{definition}$($Depth of a vertex$)$\label{depth} \,
The  depth of a vertex  $i\in V$ is defined recursively as follows.
\begin{enumerate}
\item A vertex $i\in S$ has depth $0$.
\item A vertex $i\in\Sc$ has depth $k$ iff\,  
$i$ has no depth less than $k$, and 
$(i,j)\in E$ implies $j$ has depth $<k$, for all $j\in V$.
\end{enumerate}
We denote by $S_k$ the set of all vertices of depth $\leq k$.
Because $S$ is a  structural set, every vertex $i$ has a finite depth. We call  depth of $(G,S)$ to the maximum depth of a vertex.
\end{definition}

Figure~\ref{nivel} shows the depth hierarchy of a graph $G$ with vertex set $V=\{1,2,3,4,5,7\}$ and structural set $S=\{1,3,5\}$.

\begin{figure}[h]
\includegraphics[width=0.45\textwidth]{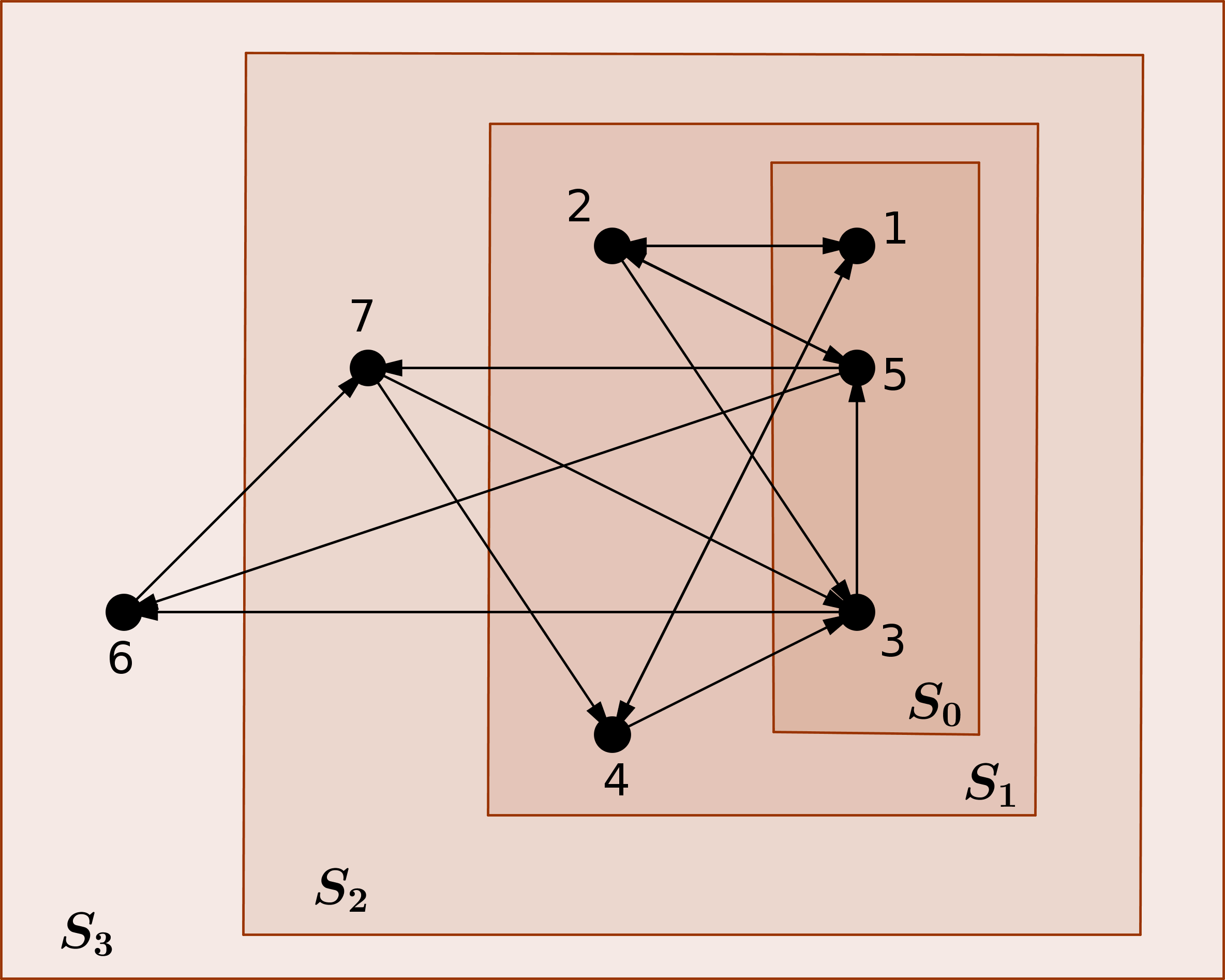}
\caption{Depth hierarchy of a graph}
\label{nivel}
\end{figure}

If $\lambda_0$ is an eigenvalue of $M_G$,
by Theorem~ \ref{eigenvector} it is also an eigenvalue of the reduced matrix $R_S(G,\lambda_0)$.
Knowing the eigenvector $u_S$ of this reduced matrix,
we can recover the corresponding eigenvector of $M_G$  as follows:

\bigskip

\begin{proposition}
If $\lambda_0$ is an eigenvalue of $M_G$  and $u_S=(u^{S}_{i})_{i\in S}$ is an eingenvector of the reduced matrix
$R_S(G,\lambda_0)$  then the following recursive relations 
\begin{equation}\label{recurs}
\left\{\begin{array}{l}
u_i  = u^ {S}_{i}  \quad \text{ for } \;  i\in S_0=S\\
\smallskip\
{\displaystyle u_\ell  = \sum_{j\in S_{k-1}} {\frac{\omega(\ell,j)}{\lambda_0-\omega(\ell,\ell)}\,u_j} } 
\quad \text{ for all }\;
\ell\in S_k\setminus S_{k-1} 
\end{array}\right.
\end{equation}
uniquely determine an eigenvector $u$ of $M_G$ associated to   $\lambda_0$.
\end{proposition}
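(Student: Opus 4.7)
The plan is to verify two things: (a) the recursion \eqref{recurs} well-defines a unique $u=(u_i)_{i\in V}$; (b) this $u$ satisfies $M_G\, u = \lambda_0\, u$ at every vertex.

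Part (a) is an induction on depth. The base case $S_0 = S$ is the first clause of \eqref{recurs}. For the inductive step, the formula for $\ell \in S_k \setminus S_{k-1}$ depends only on the already-determined values $u_j$ with $j \in S_{k-1}$, and division by $\lambda_0 - \omega(\ell,\ell)$ is legitimate by condition (ii) of Definition \ref{structural}. Since every vertex has finite depth, this determines all of $u$ uniquely.

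For part (b) at a vertex $\ell \in \overline{S}$ of depth $k \geq 1$, the key point is that by the very definition of depth every out-edge $(\ell,j)$ with $j \neq \ell$ lands in $S_{k-1}$, so $\omega(\ell,j) = 0$ whenever $j \notin S_{k-1} \cup \{\ell\}$. Clearing denominators in \eqref{recurs} therefore gives $(\lambda_0 - \omega(\ell,\ell))u_\ell = \sum_{j\neq \ell}\omega(\ell,j)\,u_j$, which is exactly \eqref{iteration} (multiplied through) and rearranges to the eigenvector equation at $\ell$.

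For $i \in S$, I would run the computation of Theorem \ref{eigenvector} in reverse: each of the inductive substitutions performed there is a purely algebraic rewriting that uses only \eqref{iteration}, so the chain of identities linking the coordinate equation $\sum_k \omega(i,k)\,u_k = \lambda_0 u_i$ to the reduced equation $\sum_{k\in S} R_{ik}\,u_k = \lambda_0 u_i$ is reversible. Starting from the given reduced equation and applying \eqref{iteration} repeatedly, collecting contributions according to the length decomposition $R_{ik} = \sum_{p=1}^{m+1} R^{(p)}_{ik}$, reproduces the eigenvector equation at $i$. The main subtlety to watch is that the forward proof of Theorem \ref{eigenvector} implicitly divides by $\lambda_0 - \omega(i,i)$ for $i \in S$, which the structural set hypothesis does not force to be nonzero; to sidestep this I would keep the coordinate equation in its un-divided form $(\omega(i,i) - \lambda_0)u_i + \sum_{k\neq i}\omega(i,k)u_k + \sum_{\ell \in \overline{S}}\omega(i,\ell)u_\ell = 0$ throughout, so that every intermediate step is a polynomial identity in the entries of $u$.
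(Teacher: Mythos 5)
Your proof is correct, and it is in fact more complete than the paper's. The paper disposes of this proposition in one sentence: it observes, exactly as in your part (b) for $\ell\in\overline{S}$, that for a vertex of depth $k$ the second sum in \eqref{iteration} can only involve $u_{\ell'}$ with $\ell'\in S_{k-1}$, and concludes that \eqref{recurs} ``follows from'' \eqref{iteration}. That argument establishes the direction ``any eigenvector of $M_G$ satisfies the recursion,'' i.e.\ uniqueness of $u$ given $u_S$, but it does not verify that the vector \emph{built} from an arbitrary eigenvector $u_S$ of $R_S(G,\lambda_0)$ actually satisfies $M_G u=\lambda_0 u$ --- in particular the coordinate equations at $i\in S$ are never checked, which is the point that matters if the reduced eigenspace were larger than the restriction of the eigenspace of $M_G$. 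Your part (b) for $i\in S$, reversing the substitution chain of Theorem \ref{eigenvector} (legitimate, since each substitution step is an unconditional equality granted \eqref{iteration} at all vertices of $\overline{S}$), supplies exactly this missing half. You also correctly flag that the forward computation divides by $\lambda_0-\omega(i,i)$ for $i\in S$, which condition (ii) of Definition \ref{structural} does not license, and your fix of carrying the undivided polynomial identity is the right one (the paper itself silently multiplies back through at the end). The only nitpick: when you say every out-edge $(\ell,j)$ with $j\neq\ell$ lands in $S_{k-1}$, note that the paper's depth definition, read literally, does not exempt $j=\ell$; you are reading it the way it must be intended, but it is worth saying so explicitly.
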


\begin{proof}
Just notice that if $\ell\in S_k\setminus S_{k-1} $ then the second part of ~\eqref{iteration}
can only contain terms $\frac{\omega(\ell,\ell')}{\lambda_0 - \omega(\ell,\ell)} \, u_{\ell'}$ with 
$\ell'\in S_{k-1}$. Hence~  \eqref{recurs} follows from ~ \eqref{iteration}.
\end{proof}

\end{section}

\bigskip
\bigskip

\begin{section}{A probabilistic interpretation}\label{interpretations}

In this section we present a probabilistic  interpretation of Theorem 1.

\bigskip

Consider a graph $G\in\G$  such that $M_G=(\omega(i,j))_{i,j}$ is a stochastic matrix.
More precisely assume $\omega(i,j)=p_{ij}$ is the transition probability
of some finite state Markov Chain (MC)  $\{X_n\}_{n\in\N}$ with state space $V$. Note that in this case  $\lambda=1$ is an eigenvalue of $M_G$.

Assume that  $S\subset V$ is a $1$-structural set 
such that $G$ has no loops in 
$\overline{S}$, 
i.e., $p_{ii}=0$ for all $i\notin S$.
We  consider the reduced matrix $R_S(G,\lambda)$ with    $\lambda=1$, denoted hereafter by  $R_S(G)$.

We will interpret the entries of $R_S(G)$ as taboo probabilities.
Recall that given a set $S\subset V$, the {\em taboo probability } 
$\taboo{S}{ij}^{(n)}$ is defined as (see ~\cite{chung})
$$ \taboo{S}{ij}^{(n)} := \Pp[\,X_n=j,\,
X_k\notin S, \, 0<k<n\, \vert\, X_0=i\, ]\;. $$
If $i,j\in S$, this is the probability of the MC returning for the first time to $S$, with  state $j$,
at time $n$, 
given that the process starts in state $i$ at time $0$.

Because
 $\lambda=1$ and $\omega(\ell,\ell)=p_{\ell\ell}=0$
for any $\ell \in \overline{S}$, applying Bayes' theorem we derive from
~\eqref{weight} that
$$ R_{ij}^{(n)}(G,S) =  \taboo{S}{ij}^{(n)} \;. $$
Hence
$$ R_{ij}(G,S) =  \sum_{n=1}^\infty \taboo{S}{ij}^{(n)} $$ 
 is the probability of the process returning for the first time to $S$, with  state $j$, given that it starts in state $i$ at time $0$.

We remark that the (normalized) eigenvectors of $M_G$, corresponding to the eigenvalue $\lambda=1$, are precisely the stationary distributions of the given Markov process.

Define recursively the sequence of stopping times 
\begin{align*}
\tau_0(\omega) &= \min \{\, k\geq 0\, \colon \, X_k(\omega)\in S\,\} \\
\tau_n(\omega) &= \min \{\, k\geq \tau_{n-1}(\omega)+1 \, \colon \, X_k(\omega)\in S\,\} \;,
\end{align*}
and consider the  $S$-valued random process $Y_n(\omega):= X_{\tau_n(\omega)}(\omega)$. The previous considerations show that this process is a MC with transition probability matrix $R_S(G)$.

Thus, from Theorem 1 we derive the following result in probability theory.

\bigskip

\begin{proposition}
Let $X_n$ be a finite MC with state space $V$ and consider the  graph
$G\in\G$ determined by the MC's transition probabilities $p_{ij}$. Assume 
\begin{enumerate}
\item $S\subset V$ is a $1$-structural set 
of $G$
such that $p_{ii}=0$ for all $i\in \overline{S}$,
\item $q=(q_i)_{i\in V}$ is a stationary distribution of the Markov chain $X_n$.
\end{enumerate}
Then the  process $Y_n=X_{\tau_n(\omega)}$ is a stationary MC with transition probability matrix 
$R_S(G)$ and stationary distribution $q_S=(q_j/\sum_{k\in S} q_k)_{j\in S}$.
\end{proposition}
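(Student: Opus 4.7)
The plan is to establish two claims separately: first, that $Y_n$ is a Markov chain with transition matrix $R_S(G)$; and second, that the normalized vector $q_S = (q_j/\sum_{k \in S} q_k)_{j \in S}$ is stationary for that chain.

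For the first claim I would simply package the discussion immediately preceding the statement. Applying the strong Markov property at each stopping time $\tau_n$, conditioned on $Y_n = X_{\tau_n} = i$ the future $(X_{\tau_n + k})_{k \geq 0}$ is a fresh copy of the chain starting at $i$, independent of the past, and the event $\{Y_{n+1} = j\}$ becomes ``first return of this fresh copy to $S$ happens at $j$''. By the already-derived identity $R_{ij}(G,S) = \sum_{n \geq 1} \taboo{S}{ij}^{(n)}$, this has probability $R_{ij}(G,S)$. In particular the dependence of $Y_{n+1}$ on the past is only through $Y_n$, so the Markov property and the transition matrix follow together.

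For the second claim the idea is to apply Theorem~\ref{eigenvector} \emph{to the transposed graph}. Stationarity of $q$ means $qM_G = q$, which, viewing $q$ as a column vector, is the same as $M_G^T q = q$; in other words, $q$ is a right eigenvector with eigenvalue $1$ of the adjacency matrix $M_{G^T}$ of the graph $G^T$ obtained from $G$ by reversing every edge. One first checks that $S$ remains a $1$-structural set of $G^T$: self-loops coincide, so condition (ii) of Definition~\ref{structural} is unchanged, and since reversing a cycle produces a cycle, condition (i) is symmetric as well. Theorem~\ref{eigenvector} then gives $R_S(G^T, 1)\, q_S = q_S$.

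The last step is the transpose identity $R_S(G^T, 1) = R_S(G)^T$. The natural bijection $\beta = (i_0, \ldots, i_p) \mapsto \beta^* = (i_p, \ldots, i_0)$ sends branches of $(G^T, S)$ from $i$ to $j$ to branches of $(G, S)$ from $j$ to $i$, and I expect a factor-by-factor inspection of formula~\eqref{weight} to show $\omega_{G^T}(\beta, 1) = \omega_G(\beta^*, 1)$: the denominators $\lambda - \omega(i_r, i_r)$ run over the same interior vertex set (and are in fact all equal to $1$ under the hypothesis $p_{ii} = 0$ on $\overline{S}$), while the numerators together pick up the edges along the same undirected path, each carrying the correct reversed weight. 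Summing yields $R_{ij}(G^T, S, 1) = R_{ji}(G, S, 1)$, so $R_S(G^T, 1) = R_S(G)^T$, and the identity $R_S(G)^T q_S = q_S$ reads exactly as the left-eigenvector equation $q_S \, R_S(G) = q_S$; dividing by $\sum_{k \in S} q_k$ gives the claimed stationary distribution. The main obstacle is this weight-reversal bookkeeping, and in particular the asymmetry between the ``head'' factor $\omega(i_0, i_1)$ and the interior ratios in \eqref{weight}, which one needs to reshuffle carefully so that the two expressions agree.
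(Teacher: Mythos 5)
Your proof is correct and follows the route the paper intends: the taboo-probability discussion preceding the statement identifies $R_S(G)$ as the transition matrix of $Y_n$, and Theorem~1 then yields the stationary distribution. The paper in fact gives no written proof (it merely says ``from Theorem 1 we derive the following result''), and your transposition step --- checking that $S$ remains a $1$-structural set of $G^T$ and that $R_S(G^T,1)=R_S(G)^T$ via the branch-reversal bijection --- supplies exactly the detail the paper glosses over, since Theorem~1 as stated concerns right eigenvectors while a stationary distribution is a left eigenvector of the transition matrix.
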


\end{section}

\bigskip
\bigskip

\begin{section}{An application: page rank type eigenvectors}

The motivation for the following application of Theorem~\ref{eigenvector} was the {\em PageRank} algorithm used by the Google search engine to calculate a web page's importance.
The {\em PageRank} algorithm assigns the relative importance of each web page by computing the dominant eigenvector (the {\em page rank vector}) of
a particular stochastic weighted adjacency matrix of the World Wide Web graph (see~\cite{BP}). 

In general, given a network we can assign the importance of each node by computing the dominant eigenvector of a weighted adjacency matrix of the correspondent graph of interactions.
Thus, our goal in this application is the following. 
Suppose that we have a dynamical network (like the World Wide Web) that changes its topology (modifying its interactions, reducing or increasing the number of nodes),
and that we want 
to update the dominant eigenvector of the modified network. We show that (in some cases) it is computationally more efficient
to perform an isospectral graph reduction, calculate the dominant eigenvector of the reduced graph and then use this vector to update the dominant eigenvector  of the entire modified network rather than use the recalculated matrix of the whole modified network to compute the dominant eigenvector through an iterative procedure.

\bigskip

Let $G=(V,E,\omega)$ be a weighted directed graph where $V=\{1,2,\ldots, N\}$. 
We assume that
\begin{enumerate}
\item $(i,i)\notin E$, for all $i\in V$,
\item $\omega(i,j) \in [0,1]$ for all $i,j\in V$, 
\item $\omega(i,j) >0$ \, $\Leftrightarrow$\, $(i,j)\in E$, for all $i,j\in V$,
\item $\sum_{i=1}^N \omega(i,j)=1$ for all $j \in V$, i.e., $M_G$ is a stochastic matrix,
\item $S\subset V$ is a $1$-structural set (in the sense of Definition~\ref{structural}),
\item $M_G$
is a stochastic primitive matrix.
\end{enumerate}
The matrix $M_G$ should be sparse in some sense for the described goal to be attainable.
In particular, by the Perron-Frobenius theorem,  $M_G$ has a unique dominant  eigenvector
$v_G=(v_i)_{i\in V}\in\R^N_+$, normalized by the condition
$\sum_{j\in V} v_j=1$, associated to the eigenvalue $1$. We will refer to $v_G$ as the {\em dominant eigenvector} of the primitive matrix $M_G$.

\bigskip

Recall that $\Bscr=\Bscr_{G,S}$ denotes the set of all branches of $(G,S)$.
Given vertices $i,j\in V$, we denote respectively by
$\Bscr_{i j}$, $\Bscr_{i \ast}$, $\Bscr_{\ast j}$ and $\Bscr_{\ast i \ast}$, the sets of all branches in $\Bscr$ that start in $i$ and end in $j$, respectively that start in $i$, that  end in $j$, and that  go through $i$. 
Notice that, since $\lambda=1$ and the graph has no loops,
for each branch 
$\beta=(i_0,i_1,\ldots, i_p)$
we have (see~\eqref{weight})
$$ \omega(\beta) = \prod_{\ell=1}^p \omega(i_{\ell-1},i_\ell) \;. $$
Recalling ~\eqref{reduced:matrix}, we have for all  $i,j \in V$ 
$$ R_{ij}(G,S) = \sum_{\beta\in \Bscr_{ij}} \omega(\beta)\;.$$
In Definition~\ref{igr} we have introduced the reduced matrix
$R_S(G)$, that we now extend according to the following definition.

\bigskip

\begin{definition}\label{extended:igr}
We call {\em extended reduced matrix} to the $N\times N$ matrix $\Rbar_S(G)$
with entries $R_{ij}(G,S)$. Let us denote by $M_{\Sc}$ the matrix $\left( \omega(i,j)\right)_{i,j\in \Sc}$, i.e., the restriction of $M_G$ to $\Sc$.
\end{definition}

\bigskip

\begin{proposition}
The set $S$ is a  structural set of $G$ if and only if 
$M_{\Sc}$ is nilpotent. Moreover,  the depth of $(G,S)$ is the smallest $k\in\N$ such that
$(M_{\Sc})^k=0$.
\end{proposition}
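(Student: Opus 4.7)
The plan is to reduce both assertions to the combinatorial statement that $M_{\Sc}$ is nilpotent iff the subgraph induced on $\Sc$ is acyclic, and then to identify the nilpotent index with the depth via a longest-walk argument. First, I would observe that under the standing assumptions of Section~4 (no loops, and $\lambda=1$), condition (ii) of Definition~\ref{structural} is automatic, since $\omega(i,i)=0\neq 1$ for every $i$. Hence ``$S$ is a structural set'' is equivalent to the purely combinatorial statement that the induced subgraph on $\Sc$ contains no directed cycle of $G$.

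Next, I would translate powers of $M_{\Sc}$ into walk counts in $\Sc$. Because the weights $\omega(i,j)$ are strictly positive precisely on the edges of $G$, for every $k\geq 1$ and $i,j\in\Sc$ the entry $(M_{\Sc})^k_{ij}$ is a sum of strictly positive terms indexed by walks $i\to i_1\to\cdots\to i_{k-1}\to j$ with intermediate vertices in $\Sc$. Thus $(M_{\Sc})^k_{ij}>0$ iff such a walk exists, and $M_{\Sc}$ is nilpotent iff there is a finite a priori bound on walk lengths in $\Sc$. A standard pigeonhole argument shows this last condition is equivalent to $\Sc$ being acyclic: a cycle of length $\ell$ through some $i\in\Sc$ gives walks from $i$ to $i$ of every length $k\ell$, while in an acyclic induced subgraph every walk visits distinct vertices and thus has length at most $\abs{\Sc}-1$. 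This yields the first equivalence.

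For the second statement, let $d$ be the depth of $(G,S)$ and $L$ the length of the longest walk in $\Sc$. By the translation above, the smallest $k\in\N$ with $(M_{\Sc})^k=0$ is exactly $L+1$, so it suffices to prove $L=d-1$. I would show by induction on $k\geq 1$ that a vertex $i\in\Sc$ has depth exactly $k$ iff the longest walk in $\Sc$ starting at $i$ has length $k-1$. The base case $k=1$ is immediate from Definition~\ref{depth}: depth $1$ means every outgoing edge from $i$ lands in $S$, equivalently no walk of positive length in $\Sc$ starts at $i$. The inductive step rests on the reformulation $\mathrm{depth}(i)=1+\max\{\mathrm{depth}(j):(i,j)\in E,\, j\in\Sc\}$ (with the convention that an empty maximum equals $0$), which mirrors exactly the one-step recursion for the longest walk length. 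Taking the maximum over $i\in\Sc$ then gives $L=d-1$, and the proposition follows. The only real obstacle is bookkeeping the boundary cases (vertices in $S$ and depth-$1$ vertices); the positivity of the weights makes the translation between ``$(M_{\Sc})^k$ nonzero'' and ``a walk of length $k$ exists'' completely rigid, so no algebraic subtleties intervene.
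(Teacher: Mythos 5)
Your proof is correct, but it takes a genuinely more combinatorial route than the paper's. The paper's argument is a single linear-algebraic observation: grouping the vertices of $\Sc$ by depth and ordering the groups by decreasing depth, a permutation conjugates $M_{\Sc}$ into a strictly block upper triangular form whose consecutive superdiagonal blocks are indexed by $D_i\times D_{i-1}$ (with $D_i$ the set of depth-$i$ vertices), from which nilpotency of index at most the depth is immediate. You instead translate powers of $M_{\Sc}$ into walk counts, use strict positivity of the edge weights to make ``$(M_{\Sc})^k_{ij}>0$'' equivalent to the existence of a length-$k$ walk inside $\Sc$, characterize nilpotency as acyclicity of the induced subgraph on $\Sc$, and identify the depth of a vertex with one plus the length of the longest walk out of it within $\Sc$. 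Both arguments rest on the same stratification by depth, but yours is more explicit on two points the paper leaves implicit: the converse implication (nilpotent implies structural) and the sharpness of the nilpotency index (that the $(k-1)$-st power is nonzero when the depth is $k$). Both of these genuinely require the nonnegativity of the weights assumed in this section --- for general complex weights a matrix supported on a graph with cycles can still be nilpotent by cancellation --- and you correctly flag where positivity enters. The price of your approach is a longer induction and some boundary bookkeeping; the paper's block-triangular picture buys brevity at the cost of those details.
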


\begin{proof}
We denote by $D_k$ the set of all vertices in $\Sc$ of depth $k$. We just need to observe that there exists a permutation matrix $P$ such that
$$P\,M_{\Sc}\,P^{-1}= \left( 
\begin{array}{ccccc}
0 & N_{k} & * & \hdots & * \\ \
0 & 0 & N_{k-1} & \hdots & * \\ \
0 & 0 & 0 & \ddots & * \\ \
0 & 0 & 0 & \hdots & N_2 \\ \
0 & 0 & 0 & \hdots & 0 \\ \
\end{array}
  \right)$$
  where the matrices $N_i$, $2 \leq i \leq k$ are indexed by vertices in $D_{i} \times D_{i-1}$, respectively. 
\end{proof}

\bigskip
\bigskip

In the sequel we care about the following measurements
\begin{itemize}
\item $N=\abs{V}$ is graph's number of vertices,
\item $s=\abs{S}$ is the cardinal of the structural set,
\item $k$ \, is the depth of $(G,S)$,
\item $m=\max\{ \max_{i\in \Sc} \abs{\Bscr_{\ast i \ast}},
\max_{i\in V} \abs{\Bscr_{  \ast i}},
\max_{i\in V} \abs{\Bscr_{ i \ast}}
\}$  is the maximum number of branches through any vertex $i$,
\item $\ell$ is the maximum number of iterations
allowed to approximate the eigenvectors,
\item $p$ is the maximum number of vertices or edges
to be added to the graph,
\end{itemize}
which we assume to satisfy the relations
\begin{equation}\label{meas}
 p\ll s \ll N, \quad    k+p \ll N\quad \text{ and }
\quad  p\cdot (k+1)\cdot m \ll N^3\;.
\end{equation}

\bigskip
\bigskip
The following data is stored, in the proposed algorithm:
\begin{itemize}
\item the $N\times N$ matrix $M_G$, 
\item the structural set $S$,
\item the set $\Bscr$ of all branches of $(G,S)$,
\item the  extended reduced $N\times N$ matrix $\Rbar_S(G)$,
\item the normalized dominant eigenvector of the reduced matrix $R_S(G)$,
\item the normalized dominant eigenvector of matrix $M_G$.
\end{itemize}

\bigskip

Besides the stored data, the input of the algorithm will consist on a short list of vertices and edges to be added to, or removed from, the original graph. Next we  briefly describe the main steps of an {\em Updating Algoritm}
to recompute the stored data for the modified graph,
denoted henceforth by $(G',S')$.

\begin{enumerate}
\item Update the matrix  $M_{G'}$.
\item  Check if the structural set $S$ remains structural for $G'$, and recompute $S'$ if necessary, by adding some 
of the new edges' endpoints.

\item Update the set $\Bscr'$ of branches of $(G',S')$.

\item Update the extended reduced $N\times N$ matrix $\Rbar_S(G')$.

\item Recompute the dominant eigenvector of the reduced matrix $R_{S'}(G')$.
\item Update the dominant eigenvector of matrix $M_{G'}$.

\end{enumerate}

\bigskip

Next we give some rough estimates on the corresponding computational costs.

\begin{enumerate}
\item The cost of updating $A_{G'}$ and $M_{G'}$ is comparatively very small, and will
be neglected. Note that the number of modified entries of these matrices is
$p\ll s \ll N$.

\item New vertices do not change the structural set.
Let us compute the cost of  updating $S$ by adding a new edge $(i,j)$.
If $i,j\in\Sc$ and $R_{ji}(G,S)>0$, then 
$S$ is no longer a structural set of the new graph $G'=(V,E')$, 
where $E'=E\cup \{(i,j)\}$. 
In this case  we set $S':=S\cup \{i\}$. Otherwise $S$ is still 
a structural set for $G'$, and we set $S':=S$.
Since the number of new edges is small this cost is  comparatively low, and will 
be neglected.

\item We consider first the  cost of updating $\Bscr'$ by adding a new edge $(i,j)$. We divide this cost in two cases:
	\begin{enumerate}
	\item[(i)] Assume  $S'=S$. In this case 
	all existing branches remain. The set of new branches can be identified with $\Bscr_{ \ast i}\times\Bscr_{j \ast}$, and is contained in 
	$\Bscr_{\ast i \ast}\cap \Bscr_{\ast j\ast}$. Therefore,
	since all branches have at most length $k$, the updating 
	cost is of order $(k+1)\cdot m$.

	\item[(ii)] Assume $S'=S\cup\{i\}$. In this case there are no new branches, but we have to delete all branches through $i$, i.e., branches in $\Bscr_{\ast i \ast}$.
	Therefore, the updating cost is of order $k\cdot m$.
	\end{enumerate}
The general cost of updating $\Bscr'$ by adding or deleting up to $p$ objects (vertices or edges) is at most $p\cdot (k+1)\cdot  m$.

\item For the cost of updating the extended reduced $N\times N$ matrix  $\Rbar_{S'}(G')$, note that for each deleted, respectively added, branch $\beta$
 we have to subtract from, respectively add to, 
$\Rbar_S(G)$ the entry $\omega(\beta)$ which is a product of at 
most $k+1$ factors.
Hence, since there are at most $p\cdot m$ modified branches,  the total cost is at most $p\cdot (k+1)\cdot m$.

\item To approximate the dominant eigenvector of  the $s\times s$ reduced matrix $R_{S'}(G')$,
we have to iterate this matrix  $\ell$ times.
The corresponding cost is $\ell\, s^3$.

\item Finally, to compute the dominant eigenvector of  $M_{G'}$,
we have to use the recursive relations~\eqref{recurs}, with the following computational cost 
$$ \sum_{j=1}^{k'}  j\, \abs{S_{j-1}'}\, \abs{S_j'\setminus S_{j-1}'} =
 \sum_{j=1}^{k'} j\, \abs{S_{j-1}'}\, (\abs{S_j'}-\abs{S_{j-1}'}  ) \;, $$
 where $k'$ is the depth of $G'$, and  $S'_j$ is the set of vertices of  $(G',S')$ with depth $\leq j$. 
 By Lemma~\ref{comb} (in the appendix) and since $k'\leq k+p$, this cost is at most $(k+p)\cdot N^2/2$.
\end{enumerate}

\bigskip

The computational cost for updating the dominant eigenvector 
of $M_{G'}$ with $\ell$ iterations of this matrix is $\ell \, N^3$.
Adding up  the partial costs above, associated with the proposed {\em updating algorithm}, 
and minding ~\eqref{meas}, we see that all partial updating costs  (3)-(6)  are clear improvements on the global cost  $\ell \, N^3$.

The same procedure can be applied to non-stochastic
primitive matrices satisfying assumptions (1)-(4)
by keeping track of both the eigenvalue $\lambda$ and the eigenvector $u_S$ of
the reduced matrix $R_{S'}(G',\lambda)$ in the following iterative procedure:

\begin{align*}
\us{n} &= {\rm versor}\left[  R_{S'}(G',\lambda_{n-1})\, \us{n-1}\right] \\
\lambda_n &= \frac{\norm{R_{S'}(G',\lambda_{n-1})\, \us{n-1}}}{\norm{\us{n-1}}}.
\end{align*}

\bigskip

The World Wide Web graph has over $N=25\times 10^9$
and average degree of about  $10$  links per page (see~\cite{austin}).
These numbers, from 2006, are surely out-of-date.
We just mention them to stress how sparse is the Google matrix.

We have used Wolfram Mathematica to randomly generate some 
sparse graphs, and compare the cost of updating the dominant eigenvector according to the algorithm above with corresponding global cost of updating it through an iterative procedure.

\begin{figure}[h]
\includegraphics[width=0.8\textwidth]{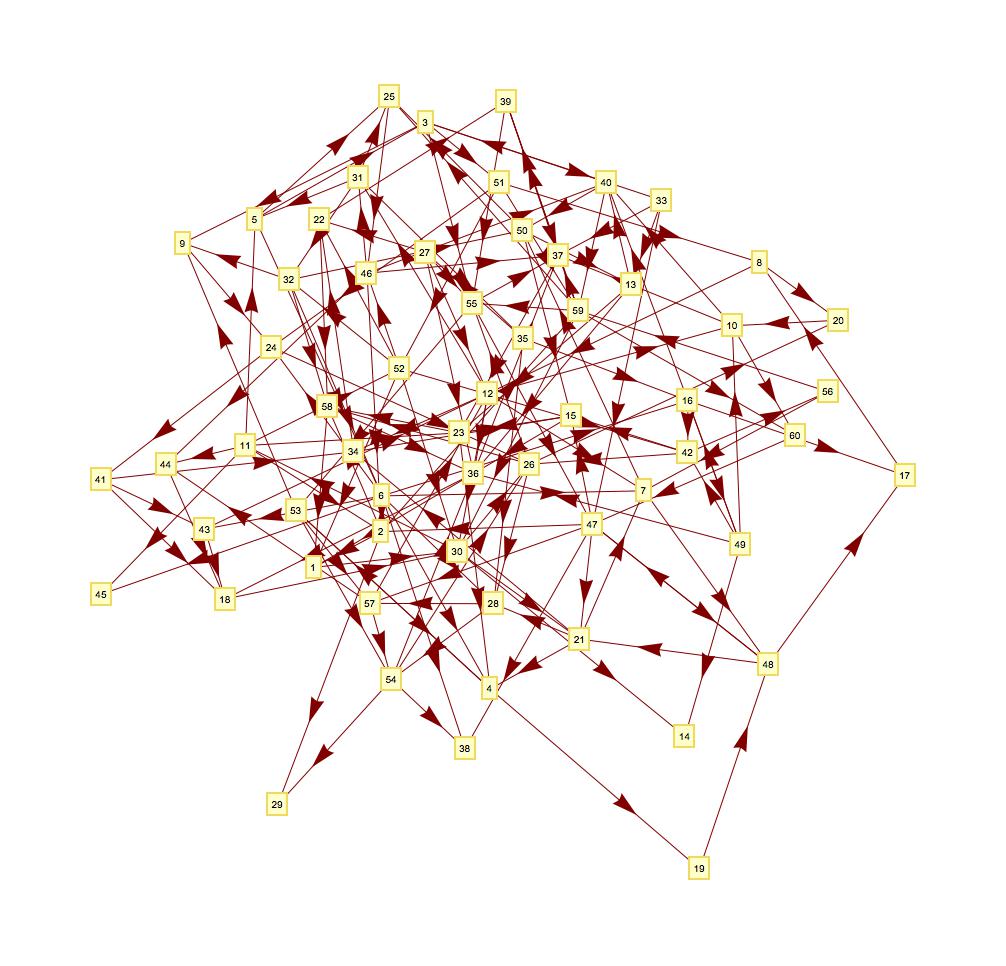}
\caption{A randomly generated graph with $N=60$}
\label{randgraph}
\end{figure}

We have considered graphs with no more than  $N\leq 60$
nodes with an average degree of $2$ or $3$ links per node.
For each generated graph we have computed a structural set $S$,
its depth $k$, and the maximum number $m$ of branches through any node. We have always considered  $p=3$ vertex 
or edge modifications in the graph.
The  standard updating of the dominant eigenvector was considered to take $\ell=10$ iterations of the graph's matrix.
Note that we didn't implement the updating algorithm but only compared the costs made explicit above.

An empirical (expected) observation is that the size and depth of the structural sets decrease  when the random graphs become sparser. Under the previous specifications,
most of the graphs randomly generated  satisfied the conditions \eqref{meas}, and the computational cost savings of the updating algorithm relative to the standard iterative algorithm were most of the times over $70\%$.

Figure~\ref{randgraph}  shows a randomly generated graph with $N=60$,
 $s=14$, $k=13$ and  $m=1125$.
In this case, based on the derived bounds, the cost savings amount to
$87.94\%$.

\end{section}

\bigskip

\begin{section}{Appendix}

We  prove here an auxiliary lemma  used to estimate the computational cost of updating the dominant eigenvalue of matrix $M_{G'}$.

Given $m\in\N$ and $N\in\R$, define the $m$-dimensional
simplex
$$ \Delta^ m_N=\{\,(x_0,x_1,\ldots, x_m)\,:\,
0\leq x_0\leq x_1\leq \ldots \leq x_m=N\,\}\;,$$
and the function $F:\Delta^ m_N\to\R$,
$$ F(x_0,x_1,\ldots, x_m):= \sum_{i=1}^m x_{i-1}\,(x_i-x_{i-1})\;. $$

\begin{lemma}\label{comb}
For all $(x_0,x_1,\ldots, x_m)\in\Delta^m_N$,
$$ \sum_{i=1}^m x_{i-1}\,(x_i-x_{i-1})\leq \frac{m\,N^2}{2\,(m+1)} \leq \frac{N^2}{2}\;. $$
\end{lemma}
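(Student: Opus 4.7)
The plan is to reduce the problem to a standard inequality by introducing the gap variables
$$y_0 := x_0, \qquad y_i := x_i - x_{i-1} \quad (1 \leq i \leq m).$$
These variables are nonnegative, and the telescoping identity combined with the constraint $x_m = N$ gives $\sum_{i=0}^m y_i = N$. Conversely, any tuple of nonnegative reals summing to $N$ produces a point of $\Delta^m_N$ via $x_i = y_0 + y_1 + \cdots + y_i$, so maximizing $F$ over $\Delta^m_N$ is the same as maximizing its $y$-reparametrization over the simplex $\{y \in \R^{m+1}_{\geq 0} : \sum y_i = N\}$.

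Next I would rewrite $F$ in the new variables. Since $x_{i-1} = y_0 + y_1 + \cdots + y_{i-1}$ and $x_i - x_{i-1} = y_i$, we get
$$F = \sum_{i=1}^m x_{i-1}(x_i - x_{i-1}) = \sum_{i=1}^m \Bigl( \sum_{j=0}^{i-1} y_j \Bigr)\, y_i = \sum_{0 \leq j < i \leq m} y_j\, y_i.$$
Comparing with the expansion of the square of the sum,
$$\Bigl(\sum_{i=0}^m y_i\Bigr)^2 = \sum_{i=0}^m y_i^2 + 2 \sum_{0 \leq j < i \leq m} y_j y_i,$$
and using $\sum_i y_i = N$, this gives the clean identity
$$2F = N^2 - \sum_{i=0}^m y_i^2.$$

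So maximizing $F$ is equivalent to minimizing $\sum_{i=0}^m y_i^2$ subject to $y_i \geq 0$ and $\sum y_i = N$. By the Cauchy--Schwarz inequality (or equivalently convexity of $t \mapsto t^2$),
$$\sum_{i=0}^m y_i^2 \geq \frac{1}{m+1}\Bigl(\sum_{i=0}^m y_i\Bigr)^2 = \frac{N^2}{m+1},$$
with equality when all the $y_i$ are equal to $N/(m+1)$. Substituting back yields
$$F \leq \frac{1}{2}\Bigl(N^2 - \frac{N^2}{m+1}\Bigr) = \frac{m\,N^2}{2(m+1)},$$
which is the desired bound; the trivial inequality $m/(m+1) \leq 1$ then gives the second estimate. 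There is no real obstacle here: the only subtlety is to notice the algebraic identity $2F = N^2 - \sum y_i^2$, after which the bound reduces to a one-line application of Cauchy--Schwarz.
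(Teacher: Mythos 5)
Your proof is correct, and it takes a genuinely different — and arguably cleaner — route than the paper. The paper argues by induction on $m$ combined with multivariable optimization: it evaluates $F$ at the equally spaced point, shows boundary points of $\Delta^m_N$ reduce to the $(m-1)$-dimensional case (hence cannot be maxima by the induction hypothesis), and then identifies the unique interior critical point via the gradient condition $x_{j-1}+x_{j+1}-2x_j=0$, i.e.\ the arithmetic progression. Your argument instead passes to the gap variables $y_i$, observes the exact identity $2F = N^2 - \sum_{i=0}^m y_i^2$ on the simplex $\sum y_i = N$, and finishes with Cauchy--Schwarz; this avoids calculus and induction entirely, gives the sharp constant together with the equality case (all $y_i$ equal, i.e.\ the arithmetic progression) in one stroke, and even shows the nonnegativity constraints $y_i\geq 0$ are not needed for the upper bound. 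The paper's approach is more pedestrian but self-contained in the sense of standard optimization; yours is shorter, fully rigorous, and in particular sidesteps a small informality in the paper's argument (the claim that a unique interior critical point beating the boundary must be the maximum tacitly uses compactness and continuity, which is fine but left implicit). Either proof establishes the lemma; yours would be the one to prefer for concision.
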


\begin{proof}
The proof goes by induction in $m$.
For $m=1$ this follows from the fact that the function
$F:[0,N]\to\R$, $F(x)=x\,(1-x)$ attains its maximum
value, $N^2/4$, at $x= N/2$.
Assume the inequality holds for $m-1$.
At the point $p=(\frac{N}{m+1}, \frac{2\,N}{m+1},\ldots,
\frac{(m+1)\,N}{m+1})\in\Delta^m_N$ the value is
$F(p)= \frac{m\,N^2}{2\,(m+1)}$. Given $x\in\partial\Delta^m_N$, then either $x_0=0$ or else $x_{j}=x_{j+1}$ for some $j=0,1,\ldots, m-1$. In any case,
dropping the coordinate $x_j$ we get a sequence
$x'\in\Delta^{m-1}_N$ with the same coordinates as $x$.
By definition of $F$, and the induction hypothesis,
$F(x)=F(x')\leq \frac{(m-1)\,N^2}{2\,m}<\frac{m\,N^2}{2\,(m+1)}$, which proves that boundary points of
$\Delta^m_N$ can never be maxima. Thus  $F$ has an interior maximum. Computing the gradient of $F$, all  critical points $(x_0,x_1,\ldots, x_m)\in {\rm int}(\Delta^m_N)$ satisfy for all $j=1,\ldots, m-1$,
$$ x_{j-1}+x_{j+1}-2\,x_j =0\;. $$
In other words, the coordinates of $x$ form an arithmetic progression.
This shows $p$ is the only
critical point of $F$ in ${\rm int}(\Delta^m_N)$, 
 and  hence the claimed inequality  must hold for $m$.
\end{proof}

\end{section}

\bigskip

\section*{Acknowledgements}
The first author was partially supported by  Funda\c{c}\~{a}o para a Ci\^{e}ncia e a 
Tecnologia, PEst, OE/MAT/UI0209/2011.

The second author was partially supported by the Research Centre of Mathematics of the University of Minho with the Portuguese Funds from the ``Funda\c c\~ao para a Ci\^encia e a Tecnologia", through the Project PEstOE/MAT/UI0013/2014.

\bigskip

\bigskip

\end{document}